\newtheorem{theorem}{Theorem}
\newtheorem*{proposition}{Proposition}
\newtheorem*{corollary}{Corollary}
\theoremstyle{definition}
\newtheorem*{remark}{Remark}
\title{River Crossing Problems: Algebraic Approach}
\date{\vspace{-5ex}}
\begin{document}
\maketitle

\begin{center}
\author{ Elena A. Efimova$^1$}
\end{center}

\footnotetext[1]{Intelligent Systems Department, Russian State University for the Humanities; {\tt yefi03@yandex.ru}.}

\begin{abstract}
We consider two river crossing problems, about jealous husbands and about missionaries and cannibals. The missionaries and cannibals problem arose a thousand years after the jealous husbands problem, although its solution had actually appeared several hundred years before its formulation. We apply an algebraic approach to study these problems, using a symmetry group action on the state set of the jealous husband problem; then category theory is used to describe the relationship between the two problems. Some historical issues are also touched, related to the fact that the missionaries and cannibals problem arose precisely when the group approach began to be widely spread and popularized. This is the approach that naturally connects both problems.
\end{abstract}

\noindent
{\it Keywords}: mathematics puzzles, state space graph, group action, category theory
\\ {\it MSC code}: 18A05, 05A05, 00A08.

\section{Introduction} 
A river crossing problem is that objects are to cross the river from one bank to another using a boat of limited capacity. 
The paper is devoted to the jealous husbands problem  and the missionaries and cannibals problem. The first problem is the following. 
\textit{Three couples must cross a river using a boat that holds at most two people. No husband wants his wife to be anywhere, ashore or in the boat, with other men without his presence. How can they cross the river?} 
The second problem is similar, but instead of three couples, it is for three missionaries and three cannibals with the following constraint: nowhere the number of cannibals must be greater than the number of missionaries.

There are variants of these problems for four or five couples and, respectively, for four or five missionaries and the same number of cannibals where the boat's capacity is three people. In the paper the general case is considered.

The jealous husbands problem first appeared in Alcuin's collection \textit{Propositiones ad Acuendos Juvenes} (en. "Problems to Sharpen the Young") about 800 AD (see \cite{hadley}). The missionaries and cannibals problem was formulated in the late 1870s (see \cite{singmaster}). Since the early 1960s, these puzzles have been used as illustrations of artificial intelligence methods, where the state space graph is usually constructed to find a solution (see Schwarz in \cite{schwarz}, Amarel in \cite{amarel}). In \cite{schwarz} Schwarz uses adjacency matrix of the graph. Bellman applies dynamic programming in \cite{bellman}. In the paper \cite{fraley}, Fraley, et al. place the graph on the coordinate plane; sometimes the states are located at the vertices of the hypercube. 

We use algebraic methods to find solutions and study the relationship between these problems. First the state spaces are considered. Then the symmetric group action on the state set of the jealous husbands problem is introduced, and we show how by using a solution of one problem another problem can be solved. We also deal with the question of the capacity of the boat. The relationship between puzzles is described using category theory. In conclusion, we turn to historical issues related to the time of the occurrence of the missionaries and cannibals problem.

{\noindent}{\bf Acknowledgement.} I am grateful to Alexander I. Efimov for useful discussions.

\section{Space of states}
In this section, we consider the space of admissible states, which is modelled as a graph, where states are vertices and  transitions between them are edges. We use the abbreviation \textit{HW} for the jealous husbands problem and \textit{MC} for the missionaries and cannibals problem.  

Denote by $n$ the number of couples in the \textit{HW} problem, and, accordingly, we have $n$ missionaries and $n$ cannibals in the \textit{MC} problem, where $n>1$. Also denote by $b$ a capacity of the boat, that is, the least number of people that it can carry such that the problem has a solution. It is known that
\[ 
b = 
\begin{cases}
       2 \text{ if } n \leq 3;\\
       3 \text{ if } n = 4 \text{ or } 5;\\
       4 \text{ if } n \geq 6
\end{cases}
\]
(see below for details). In particular, $b=n$ if $n=2$, otherwise $b < n$.

The subset of people is safe in the \textit{HW} problem  if either it does not include any husband or it contains the husband of each wife belonging to it. Similarly, the subset of people is safe in the \textit{MC} problem if either no missionary belongs to it or the number of cannibals is not greater than the number of missionaries.

The state of the problem, \textit{HW} or \textit{MC}, is determined by a subset of people who are on the left bank or on the right bank, whereas no one is in the boat, and also by the boat's location. The state is admissible if both subsets of people, on the left and on the right, are safe. Only admissible states and admissible transitions between them will be considered below.

It is clear that the only way that husbands can be divided into two non-empty subsets, corresponding to the left bank and to the right bank of the river, is that each husband is with his wife. Likewise, missionaries can be separated into two non-empty groups only if the number of missionaries coincides with the number of cannibals, both on the left and on the right.

Therefore, in both problems, there are three types of states, determined by where the husbands or the missionaries are: (1) only on the left side, (2) only on the right side, or (3) on both sides.   

Consider the \textit{HW} problem. 
It is assumed that each wife is assigned a unique number from 1 to $n$ and that husbands are assigned the numbers of their wives. Denote by $h_i$ and $w_i$ the husband and the wife with $i$ number, respectively, for $i = 1, 2, \dots, n$. Let $W$ be the set of wives, $H$ be the set of husbands, and $N$ be the set of numbers, so that $N = \{1, 2, \dots, n\}$. Also, we put $U = W \cup H$.

By $(V, E)$ denote the state space of the problem, where $V$ is a set of states and $E$ is a set of transitions. 
The state of the \textit{HW} problem will be described as a triple $(L, R, loc)$, where $L$ is a subset of wives and husbands on the left bank, $R$ is a subset of people on the right bank, and $loc$ is a location of the boat such that $loc$ is equal to $left$ or $right$ if the boat is on the left or on the right, accordingly. 
It is obvious that $R = U \setminus L$, so that the state is uniquely determined by the items $L$ (or $R$) and $loc$, but we will use the triple for the sake of further constructions. 

So, there are three types of states:

\begin{enumerate}
\item[(a)] $\left( \bigl\{w_{i_1},\dots,w_{i_p}\bigr\}, \bigl\{w_{i_{p+1}},\dots,w_{i_n},h_1,\dots,h_n \bigr\},loc\right)$, $0 \le p \le n$;
\item[(b)] $\left(\bigl\{w_{i_1},\dots,w_{i_p},h_1,\dots,h_n\bigr\},\bigl\{w_{i_{p+1} },\dots,w_{i_n} \bigr\},loc\right)$, $0 \leq p \leq n$; 
\item[(c)] $\left(\bigl\{w_{i_1},\dots,w_{i_p},h_{i_1},\dots,h_{i_p}\bigr\},\bigl\{w_{i_{p+1}},\dots,w_{i_n},h_{i_{p+1}},\dots,h_{i_n} \bigr\},loc\right)$, $0 < p < n$.
\end{enumerate}

The transition $state_1 \stackrel{f}\longrightarrow state_2$ is a pair of states $state_1$ and $state_2$, such that $state_2$ can be reached from $state_1$ by using one trip on the boat, with which a move $f = (B, loc)$ is associated, where $B$ is a safe set of people in the boat, $0 < | B | \leq b$, and $loc$ denotes the river bank from which the boat sails. Suppose $state_1 = (L_1, R_1, loc_1)$ and $state_2 = (L_2, R_2, loc_2)$. Then $loc_1 \neq loc_2$, $loc = loc_1$, $B = L_1 \setminus L_2$ if $loc_1=left$, and $B = R_1 \setminus R_2$ if $loc_1=right$.  

Consider the transition $state_1 \stackrel{f}\longrightarrow state_2$, where $f = (B, left)$.

First suppose that the state $state_1$ is of the type (a). Then the set $B$ can consist only of the following set of people:
\begin{enumerate}
\item[i)] A subset of wives from the left bank.
\end{enumerate}

Next, if $state_1$ is of the type (b), then there are three variants for the set $B$:

\begin{enumerate}
\item[ii)] A subset of wives from the left bank;
\item[iii)] All husbands if $n = b = 2$;
\item[iv)] Husbands of all wives from the right bank of the river and maybe some couples from the left bank.
\end{enumerate}

Finally, let $state_1$ be of the type (c). Then there are two cases for the set $B$:
\begin{enumerate}
\item[v)] Some wives and all husbands from the left bank;
\item[vi)] Some couples from the left bank. 
\end{enumerate}

In all cases, it is evident that $state_2$ is one of the above types (a), (b), or (c). 
Transportation from the right bank to the left bank is similar.

It should be noted that the number of wives both on each bank and in the boat is not greater than the number of husbands if the latter is not zero.

Now let us turn to the \textit{MC} problem. The state space of this problem is a pair $(V', E')$, where $V'$ is a state set and $E'$ is a set of transitions between states.
The state of the \textit{MC} problem is described as a triple $(L', R', loc)$ and the move is a pair $(B', loc)$, where the items $L'$, $R'$, and $B'$ are pairs of the form $(c, m)$ such that $c$ is a number of cannibals and $m$ is a number of missionaries, respectively, on the left bank, on the right bank, and in the boat. The $loc$ item, as before, indicates a location of the boat. 

From the above it follows that there are three types of states in the \textit{MC} problem:

\begin{enumerate}
\item[(a')] $((x, 0), (y, n), loc)$, $0 \leq x \leq n$;
\item[(b')] $((x, n), (y, 0), loc)$, $0 \leq x \leq n$;
\item[(c')] $((x, x), (y, y), loc)$, $0 < x < n$,
\end{enumerate}
where $y = n - x$.

Consider the transition $state'_1 \stackrel{f'}\longrightarrow state'_2$ from the left bank to the right bank.

First let $state'_1$ be of the type (a'). We have: 
\begin{enumerate}
\item[i')] $f'=((c, 0), left)$, $state'_2=((x - c, 0), (y + c, n), right)$, where $0 < c \leq min(x, b)$.   
\end{enumerate}

Next suppose $state'_1$ is of the kind (b') and $f' = ((c, m), left)$; then it is easy to see that $m = 0$ or $m = n$ or $m = y + c$. Therefore, there are three cases:
\begin{enumerate}
\item[ii')] $f' = ((c, 0), left)$, $state'_2 = ((x - c, n), (y + c, 0), right)$, where $0 < c \leq min(x, b)$;
\item[iii')] $f' = ((0, n), left)$ if $n = b = 2$, $state'_2 = ((x, 0), (y, n), right)$;
\item[iv')] $f' = ((c, y + c), left)$, $state'_2 = ((x - c, x - c), (y + c, y + c), right)$, where $0 \leq c \leq x$ and $0 < y + 2c \leq b$.
\end{enumerate}

Now let $state'_1$ be of the kind (c'). Then we obtain two cases:
\begin{enumerate}
\item[v')] $f' = ((c, x), left)$, $state'_2 = ((x - c, 0), (y + c, n), right)$, where $0 \leq c \leq x$ and $0 < c + x \leq b$;
\item[vi')] $f' = ((c, c), left)$, $state'_2 = ((x - c, x - c), (y + c, y + c), right)$, where $0 < c \leq x$ and $2c \leq b$.
\end{enumerate}

Transportation from the right bank to the left bank is similar.

Having a description of all admissible states and transitions, one can find all solutions in both problems.
Besides, one can observe that there is a correspondence between states and transitions of these problems. If a solution of the \textit{HW} problem is given, then replacing subsets of wives and husbands with pairs of their amounts, we obtain a solution to the \textit{MC} problem. This is discussed more precisely below.

\section{Symmetric group action on states}
Let $S_n$ be a symmetric group, that is, a permutation group on the set of $n$ elements $N = \{1, 2,\dots, n\}$. Consider an $S_n$--group action on the set of states $V$ of the \textit{HW} problem.

Let $X$ be a subset of husbands and wives and $\pi \in S_n$. We put 
$$\pi X = \{w_{\pi(i)} ~|~  w_i \in X\} \cup \{h_{\pi(j)}  ~|~  h_j \in X\}.$$

\begin{remark}
If the set $X$ is safe, then the set $\pi X$ is also safe. Indeed, let the woman $w_j$ and the man $h_i$, for $i \neq j$, belong to the set $\pi X$. Then the woman $w_{\pi^{-1}(j)}$ and the man $h_{\pi^{-1}(i)}$ belong to the set $X$. Since the set $X$ is safe, the man $h_{\pi^{-1}(j)}$ also belongs to this set. Hence, the man $h_j$ belongs to the set $\pi X$.
\end{remark}

Now let us define an action of the group $S_n$ on the set $V$. If $state = (L, R, loc)$ and $\pi \in S_n$, then we put $\pi state = (\pi L, \pi R, loc)$. It is clear that $\pi R = U \setminus \pi L$.
The correspondence $state \mapsto \pi state$ for $\pi \in S_n$ defines an automorphism $\sigma_{\pi}\colon V \to V$. 
The homomorphism of the group $S_n$ to the group of automorphisms on the set $V$ such that $\pi \mapsto \sigma_{\pi}$ determines an $S_n$--group action on the set $V$, as we have	
\begin{itemize}
\item $ (\pi_1\pi_2)state = \pi_1(\pi_2 state)$ for any $state$ and $\pi_1, \pi_2  \in S_n$;
\item $e \, state = state$ for any $state \in V$, where $e$ denotes the identity permutation.
\end{itemize}

Denote by $S_nstate$ the orbit of $state$: $S_nstate = \{\pi state ~|~\pi \in S_n\}.$
The action of the group $S_n$, as usual, defines on the set $V$ the following equivalence relation:  
$state_1 \sim state_2 \text{  iff  } S_nstate_1 = S_nstate_2.$

After applying the permutation $\pi$ to the state $(L, R, loc)$, the subset of wives is ordered in ascending numbers both in the set $\pi L$ and in the set $\pi R$. The order of husbands' numbers does not matter. This will be important in Theorem 2.

An action of $S_n$-group on the set of moves is determined in the same way: $\pi(B, loc) = (\pi B, loc)$.

Let $state_1 \stackrel{f}\longrightarrow state_2$ be a transition in the \textit{HW} problem, and $\pi \in S_n$. Then  $\pi state_1 \stackrel{\pi f}\longrightarrow \pi state_2$ is also a transition in the \textit{HW} problem. Indeed, safety of $\pi state_1$, $\pi state_2$, and $\pi f$ follows from the remark above. Suppose that $state_1 = (L_1, R_1, loc_1)$, $state_2 = (L_2, R_2, loc_2)$, and $f = (B, loc_1)$.  Then $\pi B = \pi L_1 \setminus \pi L_2$ if $loc_1=left$ and $\pi B = \pi R_1 \setminus \pi R_2$ if $loc_1=right$.

The solution of the \textit{HW} problem is a sequence $Sol$ of states and transitions between them, which connects the initial state with the finite state, of the form
$$state_0 \stackrel{f_1}\longrightarrow state_1 \stackrel{f_2}\longrightarrow \dots \stackrel{f_k}\longrightarrow state_k,$$
where $f_i$ denotes a move from $state_{i - 1}$ to $state_i$, for $i = 1, 2, \dots, k$ (sometimes the moves will be omitted, as they are uniquely restored).

From the above it follows that the permutation $\pi$ takes the solution $Sol$ to the solution $\pi Sol$ of the form
$$\pi state_0 \stackrel{\pi f_1}\longrightarrow \pi state_1 \stackrel{\pi f_2}\longrightarrow \dots \stackrel{\pi f_k}\longrightarrow \pi state_k,$$
which differs from $Sol$ only by the numbering of people.
Obviously, $\pi state_0 = state_0$ and  $\pi state_k = state_k$.

Besides, notice that if $\pi$ belongs to the stabilizer of $state_{i - 1}$, then the sequence
\[
state_0 \stackrel{f_1}\longrightarrow \dots \stackrel{f_{i-1}}\longrightarrow state_{i-1} \stackrel{\pi f_i}\longrightarrow \pi state_i \dots \stackrel{\pi f_k}\longrightarrow \pi state_k
\]
is also a solution of the \textit{HW} problem, for $i = 1, 2, \dots, k$.

Now let $X$ be a subset of husbands and wives. Denote by $| X |_H$ and by $| X |_W$ the number of husbands and the number of wives, respectively, which are contained in the set $X$. 

Suppose that $(L_1, R_1, loc_1) \sim (L_2, R_2, loc_2)$. It is clear that $loc_1 = loc_2$, $| L_1 |_H = | L_2 |_H$, $| L_1 |_W = | L_2 |_W$, $| R_1 |_H = | R_2 |_H$, and $| R_1 |_W = | R_2 |_W$.

Denote by $V/\!\sim$ a set of state orbits, that is, a quotient set with respect to the equivalence relation $\sim$.

\begin{theorem}
There exists a one-to-one correspondence between the set of state orbits  of the \textit{HW} problem and the state set of the \textit{MC} problem. 
\end{theorem}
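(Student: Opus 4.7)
The plan is to define an explicit map $\phi\colon V/\!\sim\,\longrightarrow V'$ by
\[
\phi\bigl([(L,R,loc)]\bigr) \;=\; \bigl((|L|_W,|L|_H),(|R|_W,|R|_H),loc\bigr),
\]
and to verify that it is a well-defined bijection. Well-definedness is essentially already in the excerpt: it was observed just before the theorem that if $(L_1,R_1,loc_1) \sim (L_2,R_2,loc_2)$, then the locations agree and all four husband/wife counts agree, so $\phi$ does not depend on the chosen representative.

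Next I would check that the image always lies in $V'$, by running through the trichotomy of HW state types and matching it with the trichotomy of MC state types. A type (a) state has $|L|_H=0$ and $|R|_H=n$, giving an (a') state; a type (b) state symmetrically gives a (b') state; and a type (c) state satisfies $|L|_W=|L|_H$ and $|R|_W=|R|_H$ by construction, giving a (c') state. Observe also that the three cases are mutually distinguished purely by the counts ($|L|_H=0$, $|L|_H=n$, or $0<|L|_H<n$), so no ambiguity arises.

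For surjectivity, for each admissible MC state I would produce a canonical HW preimage using the lowest available indices. Namely, for an (a')-state $((x,0),(y,n),loc)$ take
\[
L=\{w_1,\dots,w_x\},\qquad R=\{w_{x+1},\dots,w_n,h_1,\dots,h_n\};
\]
for a (b')-state, swap wives and husbands between the banks; for a (c')-state $((x,x),(y,y),loc)$ take
\[
L=\{w_1,\dots,w_x,h_1,\dots,h_x\},\qquad R=\{w_{x+1},\dots,w_n,h_{x+1},\dots,h_n\}.
\]
Safety of both subsets is immediate in each case.

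Finally, for injectivity I would show that any two HW states with equal locations and equal count profiles lie in the same $S_n$-orbit. Given two type (c) states whose left banks are $\{w_{i_1},\dots,w_{i_p},h_{i_1},\dots,h_{i_p}\}$ and $\{w_{j_1},\dots,w_{j_p},h_{j_1},\dots,h_{j_p}\}$, any $\pi\in S_n$ with $\pi(\{i_1,\dots,i_p\})=\{j_1,\dots,j_p\}$ (as sets) transports the first to the second; types (a) and (b) are handled similarly but only involve permuting wife indices. The main (minor) obstacle is simply the bookkeeping: one must be attentive that the three HW types really match the three MC types under $\phi$ and that the trichotomies are exclusive, so that injectivity can be checked type by type rather than across types.
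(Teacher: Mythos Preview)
Your proof is correct and follows essentially the same approach as the paper: you define the same forward map (the paper calls it $g$) and verify it is a well-defined bijection. The only organizational difference is that the paper packages your case-by-case surjectivity preimages into a single inverse map $h$ via the unified formula $L=\{w_1,\dots,w_c,h_1,\dots,h_m\}$ and then checks $g\circ h=\mathrm{id}$ and $h\circ g=\mathrm{id}$, whereas you argue injectivity and surjectivity separately; the content is the same.
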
 
\begin{proof}
Suppose that $state = (L, R, loc)$ is admissible in the \textit{HW} problem. Define the map $g \colon V/\!\sim \ \to V'$ by the rule $S_nstate \mapsto state'$, where  $state' = \bigl((| L |_W, | L |_H), (| R |_W, | R |_H), loc\bigr)$. As it was noted above, $state'$ is admissible in the \textit{MC} problem. 
From the remark above it follows that the mapping is well defined. 

Also, define the map $h \colon V' \to V /\! \sim $. Let $state' = ((c, m), (n - c, n - m), loc)$. We put $h(state') = S_nstate$, where $state = (L, R, loc)$, $L = \{w_1, w_2, \dots, w_c, h_1, h_2, \dots, h_m\}$, and $R = U \setminus L$. It is clear that if $state'$ is admissible, then $state$ is admissible, as well.

For these two mappings, we have
\begin{eqnarray*}
(h \circ g)\left(S_nstate\right) = h\left(g(S_nstate)\right) = h(state') = S_nstate;\\
(g \circ h)(state') = g(h(state')) = g\left(S_nstate\right) = state'.
\end{eqnarray*}

Hence, the mappings $g$ and $h$ are mutually inverse and one-to-one. The theorem is proved.
\end{proof}

\begin{corollary}
If the sequence
$$state_0 \rightarrow state_1 \rightarrow  \dots \rightarrow  state_k$$
is a solution of the \textit{HW} problem, then the sequence 
$$g\left(S_nstate_0\right) \rightarrow g\left(S_nstate_1\right) \rightarrow  \dots \rightarrow g\left(S_nstate_k\right)$$
is a solution of the \textit{MC} problem. 
\end{corollary}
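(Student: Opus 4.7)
My plan is to lift each HW transition to an MC transition via $g$, using Theorem 1 to handle the vertices and checking separately that the induced moves are admissible MC moves. By Theorem 1, each $g(S_n state_i)$ is already a well-defined admissible state of the MC problem, so it suffices to produce an admissible MC move between consecutive images and to verify that the endpoints of the sequence map to the initial and final MC states.

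For each HW move $f_i = (B_i, loc_{i-1})$ I would take the induced MC move to be $f'_i = ((|B_i|_W, |B_i|_H), loc_{i-1})$. Its admissibility splits into safety and capacity. Safety of $B_i$ in HW: if $|B_i|_H > 0$, then every wife in $B_i$ has her husband in $B_i$, so $|B_i|_W \leq |B_i|_H$, which is exactly the MC safety condition in the boat. Capacity $0 < |B_i| = |B_i|_W + |B_i|_H \leq b$ carries over unchanged. To check that $f'_i$ takes $g(S_n state_{i-1})$ to $g(S_n state_i)$, I would use the fact that $B_i = L_1 \setminus L_2$ with $L_2 \subseteq L_1$ when $loc_{i-1} = left$ (symmetrically on the right), so the wife and husband counts split additively: $|L_2|_W = |L_1|_W - |B_i|_W$ and $|L_2|_H = |L_1|_H - |B_i|_H$, with the opposite sign on the $R$-side. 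This matches the MC update rule in every case (i')--(vi').

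The initial HW state $(U, \emptyset, left)$ maps under $g$ to $((n, n), (0, 0), left)$, the initial MC state, and similarly the all-on-the-right terminal states correspond, so the resulting sequence is indeed a valid MC solution. I do not anticipate any real obstacle: the work is essentially bookkeeping, matching the HW move classification (i)--(vi) to the MC classification (i')--(vi'), which falls out immediately once the translation $B \mapsto (|B|_W, |B|_H)$ is fixed and safety plus capacity are verified.
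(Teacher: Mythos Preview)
Your proposal is correct and follows the same route the paper takes: the paper does not supply a separate proof of the corollary but treats it as immediate from Theorem~1 together with the Section~2 observation that the HW transition cases (i)--(vi) match the MC cases (i')--(vi') under $B \mapsto (|B|_W,|B|_H)$. Your write-up simply makes that correspondence explicit (safety via $|B|_W \le |B|_H$ when $|B|_H>0$, capacity unchanged, counts split additively), which is exactly the bookkeeping the paper leaves to the reader.
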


So, in order to obtain a solution to the \textit{MC} problem from the solution of the \textit{HW} problem, it is obviously enough to omit the numbers of people and replace wives with cannibals and husbands with missionaries.

Further, let us show that having the solution of the \textit{MC} problem, one can construct, in turn, a certain subset of solutions of the \textit{HW} problem.

\begin{theorem}
Suppose the sequence 
\[ state'_0 \rightarrow state'_1 \rightarrow \dots \rightarrow state'_k\]
is a solution of the \textit{MC} problem. Then there exists a solution
\[state_0 \rightarrow state_1 \rightarrow  \dots \rightarrow  state_k\]
of the \textit{HW} problem such that, for each $j = 0, 1, \dots, k$, the following condition holds:
\[ 
state_j \in h\left(state'_j\right).\]    
\end{theorem}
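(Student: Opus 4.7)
The plan is to construct the sequence $state_0, state_1, \dots, state_k$ by induction on $j$. For the base case, $state'_0$ is the initial MC configuration $((n,n),(0,0),left)$, whose preimage under $g$ is the singleton orbit consisting of $state_0 = (U, \emptyset, left)$, so $state_0$ is forced. For the inductive step, assume $state_{j-1} \in h(state'_{j-1})$ has been defined and consider the given MC transition $state'_{j-1} \to state'_j$; the task is to exhibit a HW move from $state_{j-1}$ whose target $state_j$ lies in $h(state'_j)$.

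The choice of HW move is dictated, essentially uniquely, by the classification of Section 2. The given MC transition belongs to one of the six types (i')--(vi'), each of which is paired with the HW transition type (i)--(vi) carrying the same counts of wives and husbands. In types (i'), (ii'), (iii') and (vi') the HW datum $(B, loc)$ is determined up to a free choice of a subset of wives (or of couples) of the prescribed size from the active bank. In type (iv') the boat must contain all husbands whose wives currently lie on the opposite bank, together with an arbitrary collection of $c$ couples from the active bank; in type (v') it must contain all husbands on the active bank, together with an arbitrary choice of $c$ of the wives from that same bank. In every case the set $B$ is well-defined, since the inductive hypothesis pins down $state_{j-1}$ and hence the identities of the people on each bank.

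The remaining work is to verify, case by case, that the HW move so defined is admissible, which mirrors the analysis already performed in Section 2. For instance, in case (iv') the left bank is left with $x-c$ wives paired with their $x-c$ husbands (type (c)), the right bank becomes $y+c$ paired couples (also type (c)), and the boat is safe because each of its $c$ wives travels with her own husband; the other cases are analogous, and right-to-left transitions are completely symmetric. Once admissibility is confirmed, the wife and husband counts on each bank in $state_j$ match those recorded in $state'_j$, so Theorem 1 yields $g(S_n state_j) = state'_j$, i.e.\ $state_j \in h(state'_j)$, closing the induction. I expect the only delicate point to be the bookkeeping in case (iv'), where the specific husbands to be moved depend on where their wives sit in $state_{j-1}$; the inductive hypothesis makes these identities unambiguous, so the difficulty is one of organization rather than of substance.
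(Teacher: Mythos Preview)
Your plan is correct and essentially complete: the induction works, and your case-by-case description of how to lift each MC move to an HW move is accurate, including the care you take in cases (iv') and (v') where the identities of the husbands to be moved are forced by the location of their wives.

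The paper argues a little differently. Rather than making an arbitrary choice of wives or couples at each step as you do, it first applies a permutation $\pi \in S_n$ to the \emph{entire} path built so far so that the current state $state_{i-1}$ acquires the canonical form in which the wives on the left bank are exactly $w_1,\dots,w_p$ (in increasing order). Only then does it apply a single fixed transition rule, written explicitly with the sets $W_{p,q}$ and $H_{p,q}$, for each of the six cases. Your approach is more direct for the bare existence statement and avoids the bookkeeping of repeatedly re-indexing the prefix of the solution. The paper's approach costs that extra renaming step but buys something in return: the permutation applied at each step is always a cyclic shift, which yields the subsequent remark that the permutations arising in the construction lie in the rotation subgroup of $S_n$; this structural observation would not fall out of your version.
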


\begin{proof}
For the initial states, we have $h(state'_0)=\{state_0\}$, so $state_0 \in h(state'_0)$. Let us show that for each $i = 1, 2, \dots, k$ there exists a transition $state_{i-1} \stackrel{f_i} \longrightarrow state_i$ such that $state_i \in h\left(state'_i\right)$. The proof is by induction on $i$. 

Consider the first transition. Since the initial state of the \textit{MC} problem  $\bigl((n, n), (0, 0), left\bigr)$ corresponds the case (b') (see Section 2), we have the following cases:
\begin{enumerate}
\item[ii')] $f'_1 = \bigl((c, 0), left\bigr)$, $state'_1 = \bigl((n - c, n), (c, 0), right\bigr)$, where $0 < c \le b$;
\item[iii')] $f'_1 = \bigl((0, n), left\bigr)$ if $n = b = 2$, $state'_1 = \bigl((n, 0), (0, n), right\bigr)$;
\item[iv')] $f'_1 = \bigl((c, c), left\bigr)$, $state'_1 = \bigl((n - c, n - c), (c, c), right\bigr)$, where $0 < 2c \le b$.
\end{enumerate}

Let us introduce some auxiliary notation. We put for $0 \le p \le q \le n$:
\[W_{p, q} = \bigl\{w_i \bigm | i = p, p + 1, \dots, q\bigr\} \text{ and } H_{p,  q} =\bigl\{h_j \bigm | j = p, p + 1, \dots, q\bigr\}.\]

The initial state of the \textit{HW} problem $\bigl(U, \emptyset, left\bigr)$ corresponds to the case (b). So, under the conditions indicated above, we put for the first transition:

\begin{enumerate}
\item[ii)] $f_1 = (W_{n - c + 1, n}, left)$, 
    $state_1 = (W_{1, n - c} \cup H, W_{n - c + 1, n}, right)$;
\item[iii)] $f_1 = (H, left)$, $state_1 = (W, H, right)$;
\item[iv)] $f_1 = (W_{n - c + 1, n} \cup H_{n - c + 1, n}, left)$, $state_1 = (W_{1, n - c} \cup H_{1, n - c}, 
 W_{n - c + 1, n} \cup H_{n - c + 1, n}, right)$.
\end{enumerate}

In all cases, $state_1 \in h(state'_1)$.

Suppose that the assertion holds for states from 1 to $i - 1$ and the transitions between them. Consider the next transition.
Let $state_{i-1}$ have the form 
\[
\bigl(\bigl\{w_{i_1},\dots,w_{i_p},h_{j_1},\dots,h_{j_q} \bigr\}, \bigl\{w_{i_{p+1}}, \dots, w_{i_n},h_{j_{q+1}},\dots,h_{j_n} \bigr\}, left\bigr),
\]
where $i_1 < \dots < i_p$ and $i_{p + 1} < \dots < i_n$, and $\pi$ be a permutation inverse to the following permutation:
\[
\begin{pmatrix}
1 & \cdots & p & p+1 & \cdots & n \\
i_1 & \cdots &i_p & i_{p+1} &\cdots &i_n 
\end{pmatrix}. 
\]

Applying $\pi$ to the previously constructed states from $state_0$ to $state_{i - 1}$ and to the transitions between them, we obtain a new path such that $\pi state_j \in h(state'_j)$,
for $j = 1, \dots, i - 1$, where all transitions are admissible. But $\pi state_{i-1}$ is equal to
\[
\Bigl(
\bigl\{w_1,\dots,w_p,h_{\pi(j_1)},\dots,h_{\pi(j_q)} \bigr\},  \bigl\{w_{p+1}, \dots, w_n,h_{\pi(j_{q+1})},\dots,h_{\pi(j_n)} \bigr\}, left\Bigr).
\]

So without loss of generality we can assume that $state_{i-1}$ has the above form. Under the conditions detailed in Section 2 for $state'_{i-1}$, we put for the transitions:

\begin{enumerate}
\item[(a)] $state_{i-1}=\bigl(W_{1,x}, W_{x+1,n} \cup H, left \bigr);$
\begin{enumerate}
\item[i)] $f_i = \bigl(W_{x - c + 1, x}, left\bigr),\  
state_i = \bigl(W_{1, x - c}, W_{x - c + 1, n} \cup H, right\bigr)$;
\end{enumerate}

\item[(b)] $state_{i-1}=\bigl(W_{1,x} \cup H, W_{x+1,n} , left \bigr);$

\begin{enumerate}
\item[ii)] $f_i = \bigl(W_{x - c + 1, x}, left\bigr),\  
state_i = \bigl(W_{1, x - c} \cup H, W_{x - c + 1, n}, right\bigr)$;
\item[iii)] $f_i = \bigl(H, left\bigr),\  
state_i = \bigl(W_{1, x}, W_{x + 1, n} \cup H, right\bigr)$;
\item[iv)] $f_i = \bigl(W_{x - c + 1, x} \cup H_{x - c + 1, n}, left\bigr), 
state_i = \bigl(W_{1, x - c} \cup H_{1, x - c}, W_{x - c + 1, n} \cup H_{x - c + 1, n}, right\bigr)$;
\end{enumerate}

\item[(c)] $state_{i-1}=\bigl(W_{1,x} \cup H_{1,x}, W_{x+1,n} \cup H_{x+1,n}, left \bigr);$

\begin{enumerate}
\item[v)] $f_i = \bigl(W_{x - c + 1, x} \cup H_{1, x}, left\bigr),  
state_i = \bigl(W_{1, x - c}, W_{x - c + 1, n} \cup H, right\bigr)$;
\item[vi)] $f_i = \bigl(W_{x - c + 1, x} \cup H_{x - c + 1, x}, left\bigr), 
state_i = \bigl(W_{1, x - c} \cup H_{1, x - c}, W_{x - c + 1, n} \cup H_{x - c + 1, n}, right\bigr)$.
\end{enumerate}
\end{enumerate}

Transportation from the right bank to the left bank is considered similarly. 

Therefore, $state_i \in h\{state'_i\}$.
At step $k$, since $h(state'_k) = \{state_k\}$, the transition is carried out to the final state, so that we get a solution of the \textit{HW} problem. Theorem 2 is proved. 
\end{proof}

By constructing the orbits of states and indicating transitions between them, we can obtain the whole set of solutions of the \textit{HW} problem, which correspond to the solution of the \textit{MC} problem. 

For example, let us take the following solution of the \textit{MC} problem:
\begin{eqnarray*}
\begin{split}
&((3, 3), (0, 0), left) \to ((1, 3), (2, 0), right) \to ((2, 3), (1, 0), left) 	\to \\
&((0, 3), (3, 0), right) \to ((1, 3), (2, 0), left) \to ((1, 1), (2, 2), right) \to \\
&((2, 2), (1, 1), left) \to	((2, 0), (1, 3), right) \to ((3, 0), (0, 3), left) \to \\
&((1, 0), (2, 3), right) \to ((2, 0), (1, 3), left) \to	((0, 0), (3, 3), right).
\end{split}
\end{eqnarray*}

Let us construct a solution of the \textit{HW} problem, as shown in Theorem 2. The first transition is
\[
(U, \emptyset, left) \xrightarrow[]{(\{w_2,w_3 \},left)} (\{w_1, h_1, h_2, h_3\}, \{w_2, w_3\}, right).
\]

According to the theorem, we apply the permutation 
\[
\pi = \begin{pmatrix}
1&2&3\\3&1&2
\end{pmatrix},
\]
which is inverse to the permutation
\[
\begin{pmatrix}
1&2&3\\2&3&1
\end{pmatrix},
\]
to this path, and then perform the second transition. As a result, we get:
\[
(U, \emptyset, left) \xrightarrow[]{(\{w_1,w_2 \},left)} (\{w_3, h_1, h_2, h_3\}, \{w_1, w_2\}, right) \xrightarrow[]{(\{w_2 \},right)} (\{w_2, w_3, h_1, h_2, h_3\}, \{w_1\}, left).
\]

Then again the permutation $\pi$ must be applied to the entire constructed part of the solution. Continuing in the same way, we obtain the following solution:
\begin{eqnarray*}
\begin{split}
&(\{w_1, w_2, w_3, h_1, h_2, h_3\}, \emptyset, left) \to (\{w_3, h_1, h_2, h_3\}, \{w_1, w_2\}, right) \to  \\
& (\{w_2, w_3, h_1, h_2, h_3\}, \{w_1\}, left) \to (\{h_1, h_2, h_3\}, \{w_1, w_2, w_3\}, right) \to \\ 
&(\{w_1, h_1, h_2, h_3\}, \{w_2, w_3\}, left) \to (\{w_1, h_1\}, \{w_2, w_3, h_2, h_3\}, right) \to \\
&(\{w_1, w_3, h_1, h_3\}, \{w_2, h_2\}, left) \to (\{w_1, w_3\}, \{w_2, h_1, h_2, h_3\}, right) \to  \\
&(\{w_1, w_2, w_3\}, \{h_1, h_2, h_3\}, left) \to (\{w_2\}, \{w_1, w_3, h_1, h_2, h_3\}, right) \to  \\
&(\{w_1, w_2\}, \{w_3, h_1, h_2, h_3\}, left) \to (\emptyset, \{w_1, w_2, w_3, h_1, h_2, h_3\}, right)
\end{split}
\end{eqnarray*}

While searching for the solution in accordance with the method indicated in the theorem, it is necessary to apply successively the permutations $e$, $\pi$, $\pi$, $e$, $\pi^{-1}$, $\pi$, $\pi$, $\pi^{-1}$, $e$, $\pi$, $\pi$ to the previously constructed part of the solution, before making the transitions from the first to the last, respectively.

Note that the set of permutations $\{e, \pi, \pi^{-1}\}$ forms a subgroup of the group $S_3$ that is isomorphic to the rotation subgroup of the symmetry group of a regular triangle.

\begin{remark}
Permutations that are applied while constructing a solution under the method described in Theorem 2 can only be of the form
\[
\begin{pmatrix}
1 & 2 & \dots & p & p+1 & p + 2 &\dots & n \\
n - p + 1 & n - p + 2 & \dots & n & 1 & 2 & \dots & n - p
\end{pmatrix},
\]
where $p = 1, 2, \dots, n$. Therefore, they always belong to a subgroup of $S_n$ that is isomorphic to the rotation subgroup of the symmetry group of a regular \textit{n}--gon.
\end{remark}

Now let us construct the whole set of solutions. We have:
\begin{eqnarray*}
\begin{split}
S_3(\{w_3, h_1, h_2, h_3\},\{w_1, w_2\}, right)= 
 \bigl\{&(\{w_3, h_1, h_2, h_3\}, \{w_1, w_2\}, right),\\ 
&{}(\{w_2, h_1, h_2, h_3\}, \{w_1, w_3\}, right), \\
&{}(\{w_1, h_1, h_2, h_3\}, \{w_2, w_3\}, right)  \bigr\}. 
\end{split}
\end{eqnarray*}

The set of moves that corresponds to $((2, 0), left)$ is the following:
\[
\bigl\{(\{w_1, w_2\}, left), (\{w_1, w_3\}, left), (\{w_2, w_3\}, left)\bigr\}.
\]

Each move transfers the initial state into one of the states belonged to the orbit of the first state. Then the construction is similar. The entire set of solutions is schematically shown in Figure \ref{fig1} at the top. At the bottom, this figure shows the solution of the \textit{MC} problem. Numbers denote states from the above solutions. Edges are marked by moves; the item indicating the location of the boat is omitted. The states where the boat is on the left bank are painted black. The above solution of the \textit{HW} problem is drawn with thicker lines. It can be seen from the figure that there are 216 solutions of the \textit{HW} problem that correspond to the solution of the \textit{MC} problem.

\begin{figure}[h]
\center{\includegraphics[width=0.65\linewidth]{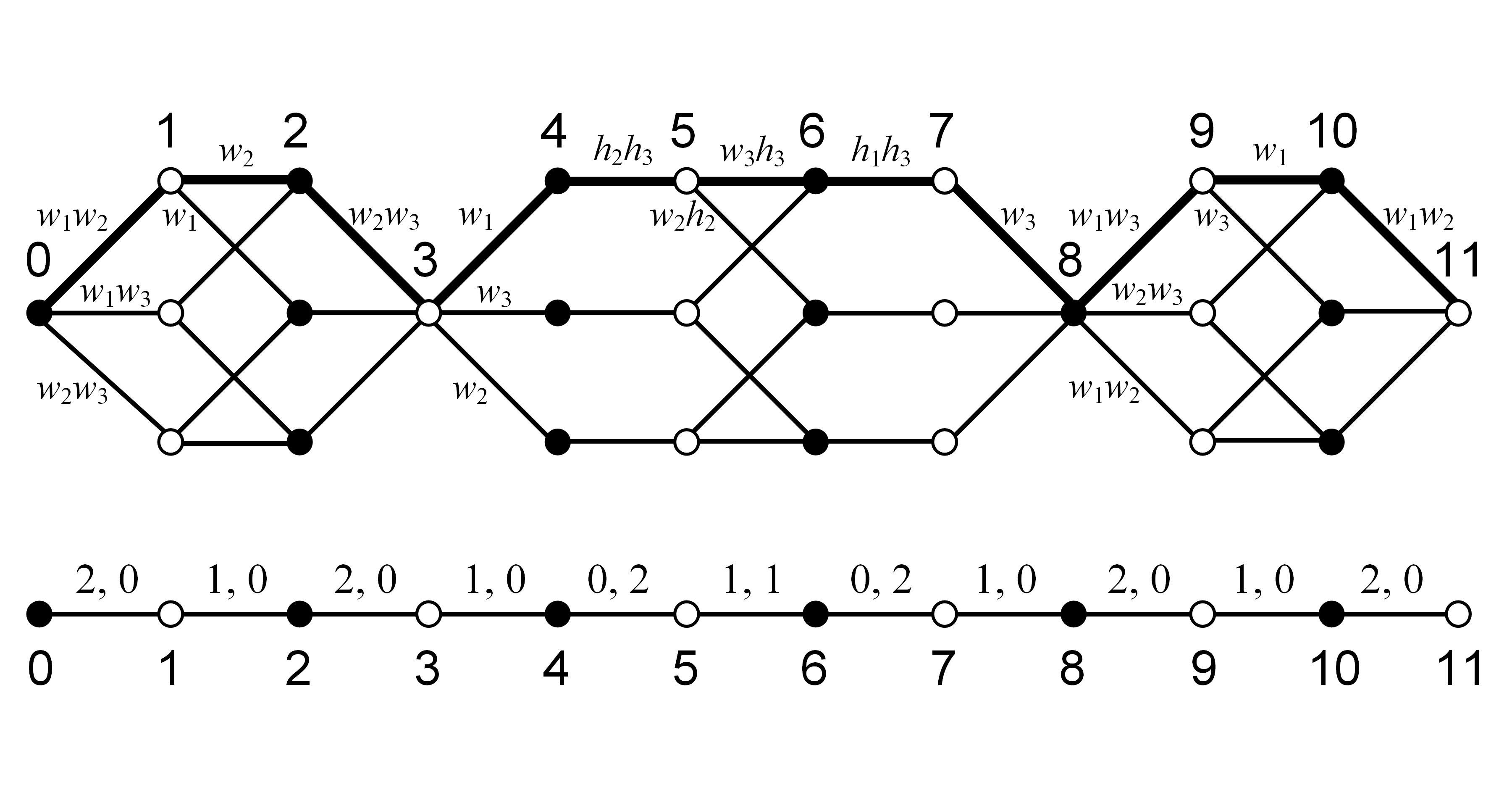}}
\caption{The set of solutions of the \textit{HW} problem and the related solution of the \textit{MC} problem, $n = 3$}
\label{fig1}
\end{figure}

It is easy to verify that the number of optimal, i.e., the shortest solutions to the problem of missionaries and cannibals for $n = 3$ is 4. The number of optimal solutions to the problem of husbands and wives for 3 couples is 486. The schemes of optimal solutions for these problems are shown in Figure \ref{fig2}.

\begin{figure}[h]
\center{\includegraphics[width=0.7\linewidth]{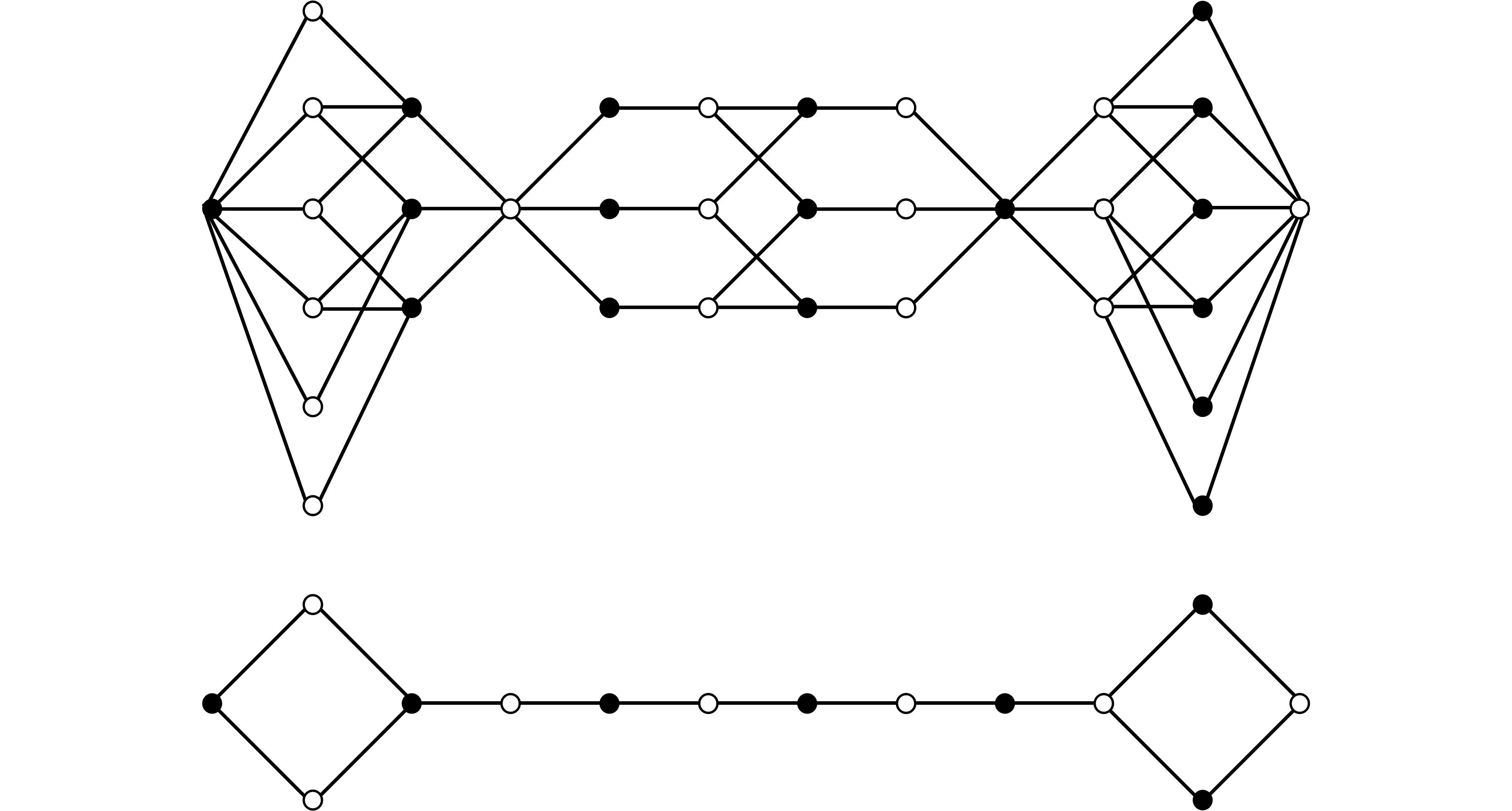}}
\caption{The schemes of optimal solutions of the \textit{HW} problem (at the top) and of the \textit{MC} problem, $n = 3$}
\label{fig2}
\end{figure}

\section{Limitation on boat capacity}
Let us show that both a two-seater boat for $n> 3$ and a three-seater boat for $n> 5$ will not be enough for the crossing.

\begin{proposition}
Suppose n is equal to 4 or 5 and $b = 2$, or $n \ge 6$ and $b = 3$. Then the \textit{MC} problem has no solution.
\end{proposition}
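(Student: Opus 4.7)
The plan is to argue by contradiction. Suppose a solution $state'_0 \to state'_1 \to \cdots \to state'_k$ of the \textit{MC} problem exists, and let $j$ be the smallest index for which $state'_j$ has zero missionaries on the left bank; such a $j$ exists because $state'_k$ does, and $j \geq 1$ since $state'_0$ has $n \geq 4$ missionaries on the left. By minimality every $state'_i$ with $i < j$ retains at least one missionary on the left, so these states all belong to types (b') or (c') from Section~2, and the corresponding prefix of the solution stays inside the subgraph of admissible states of those two types.

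I would then analyze the transition $state'_{j-1} \to state'_j$. Since the number of missionaries on the left bank drops to zero, this trip sails from left to right and carries all $m \geq 1$ remaining left-bank missionaries, so $m \leq b$. The case $m = n$ is ruled out because each hypothesis of the proposition entails $n \geq 2b > b$. Writing the left bank of $state'_{j-1}$ as $(c_L, m)$, admissibility of both banks (with $1 \leq m \leq n - 1$) forces $c_L \leq m$ and $c_L \geq m$, whence $c_L = m$; hence $state'_{j-1}$ has the form $((m, m), (n - m, n - m), left)$ with $1 \leq m \leq b$, that is, a type (c') state with boat on the left and equal-count parameter at most $b$.

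The central step is the reachability claim: within the subgraph of admissible (b')$\cup$(c') states, every reachable type (c') state with boat on the left has its equal-count parameter in $\{n - b + 1, \dots, n - 1\}$. I would prove this by classifying the transitions using Section~2's case list together with its dual for trips sailing from the right bank. The relevant observations are: (i) transitions internal to (b') merely shuffle cannibals; (ii) a transition (b') $\to$ (c') from $((c, n), (n - c, 0), left)$ via a move $(c', m')$ must satisfy $m' = n - c + c'$ to equalize counts, and the boat constraint $c' + m' \leq b$ then forces $c \geq n - b$ and lands in a right-boat (c') state with parameter $c - c' \geq n - b$; (iii) the dual transition (c') $\to$ (b') is possible only from right-boat (c') states with parameter $\geq n - b$; and (iv) the only (c') $\to$ (c') moves inside the subgraph are the $(1,1)$-swaps toggling $((p, p), (n - p, n - p), right)$ with $((p + 1, p + 1), (n - p - 1, n - p - 1), left)$. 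An induction on the step index then confines the left-boat (c') parameter to $\{n - b + 1, \dots, n - 1\}$.

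In each case of the hypothesis we have $n \geq 2b$, so $n - b + 1 > b$, and hence no left-boat (c') state with parameter $\leq b$ is reachable, contradicting paragraph two and proving the proposition. The principal obstacle is the reachability claim: one must track all routes into left-boat (c') states, including those that leave (c') for a round-trip through (b') and re-enter later. The key structural reason the bound $p \geq n - b + 1$ survives these detours is that each fresh entry into (c') from (b') requires the trip to ferry enough missionaries to equalize counts on a bank that still holds $n - p$ of the other side, consuming at least $n - p$ of the $b$ seats, so $p \geq n - b$ at the right-boat stage, and an additional $(1,1)$-swap to reach the left bank pushes the bound up to $n - b + 1$.
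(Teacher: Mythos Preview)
Your argument is correct and takes a genuinely different route from the paper's. The paper proceeds by explicit enumeration: it lists all admissible states for $n=4$, $b=2$, partitions them into reachability classes, observes that the initial and final states lie in different classes, and then declares that ``the remaining cases are treated similarly,'' closing with an unproved list of the states reachable from the start in general. Your proof, by contrast, is structural and uniform: you locate the first moment the left bank is emptied of missionaries, pin down that the preceding state must be a left-boat type~(c') state with equal-count parameter $m\le b$, and then maintain the invariant that every reachable left-boat (c') state in the (b')$\cup$(c') prefix has parameter at least $n-b+1$. The hypothesis $n\ge 2b$ (which holds in every case of the proposition) makes these two ranges disjoint.

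What each approach buys: the paper's enumeration yields extra information you do not recover---the exact shape of the connected component of the initial state and its size $2(n+b)-1$---but it leaves the general case as an assertion rather than an argument. Your method is cleaner and actually proves the general statement in one stroke; it also isolates \emph{why} the crossing fails, namely that entering the diagonal (c') region from (b') costs at least $n-p$ seats to equalize counts, so with $b$ seats one can never push the left-bank parameter below $n-b+1$, while evacuating the missionaries would require it to drop to at most $b$. One small point worth making explicit when you write it up: your claim that the only (c')$\to$(c') moves are $(1,1)$-swaps relies on $b\le 3$ (so that $2c\le b$ forces $c=1$), which is indeed guaranteed by the hypotheses; stating this keeps the dependence on $b$ transparent.
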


\begin{proof}
First suppose $n = 4$ and $b = 2$. Then there are 26 admissible states:
\begin{eqnarray*}
\begin{split}
&((p, 0), (4 - p, 4), loc) \text{ such that } p = 0, 1, 2, 3, 4;\\
&((q, 4), (4 - q, 0), loc) \text{ such that } q = 0, 1, 2, 3, 4;\\
&((r, r), (4 - r, 4 - r), loc) \text{ such that } r = 1, 2, 3,
\end{split}
\end{eqnarray*}
where $loc = left, right$.

For moves, only the cases $(1, 0)$, $(2, 0)$, $(0, 1)$, $(0, 2)$, and $(1, 1)$ are possible. The set of states is divided into 6 equivalence classes of reachable states, among which 4 classes contained isolated states, such as $((0, 0), (4, 4), left)$, $((4, 0), (0, 4), right)$, $((0, 4), (4, 0), left)$, and $((4, 4), (0, 0), right)$, and 2 classes contained 11 states each, one of which includes the initial state $((4, 4), (0, 0), left)$, and the other includes the final state $((0, 0), (4, 4), right)$; thus, the latter state is not reachable from the first. 

The picture of the state space, into which all states reachable from the initial state  and all possible transitions between them are included, is shown in Figure \ref{fig3}. Signature to vertices is an abbreviated version of our notation for states.

\begin{figure}[h]
\center{\includegraphics[width=0.5\linewidth]{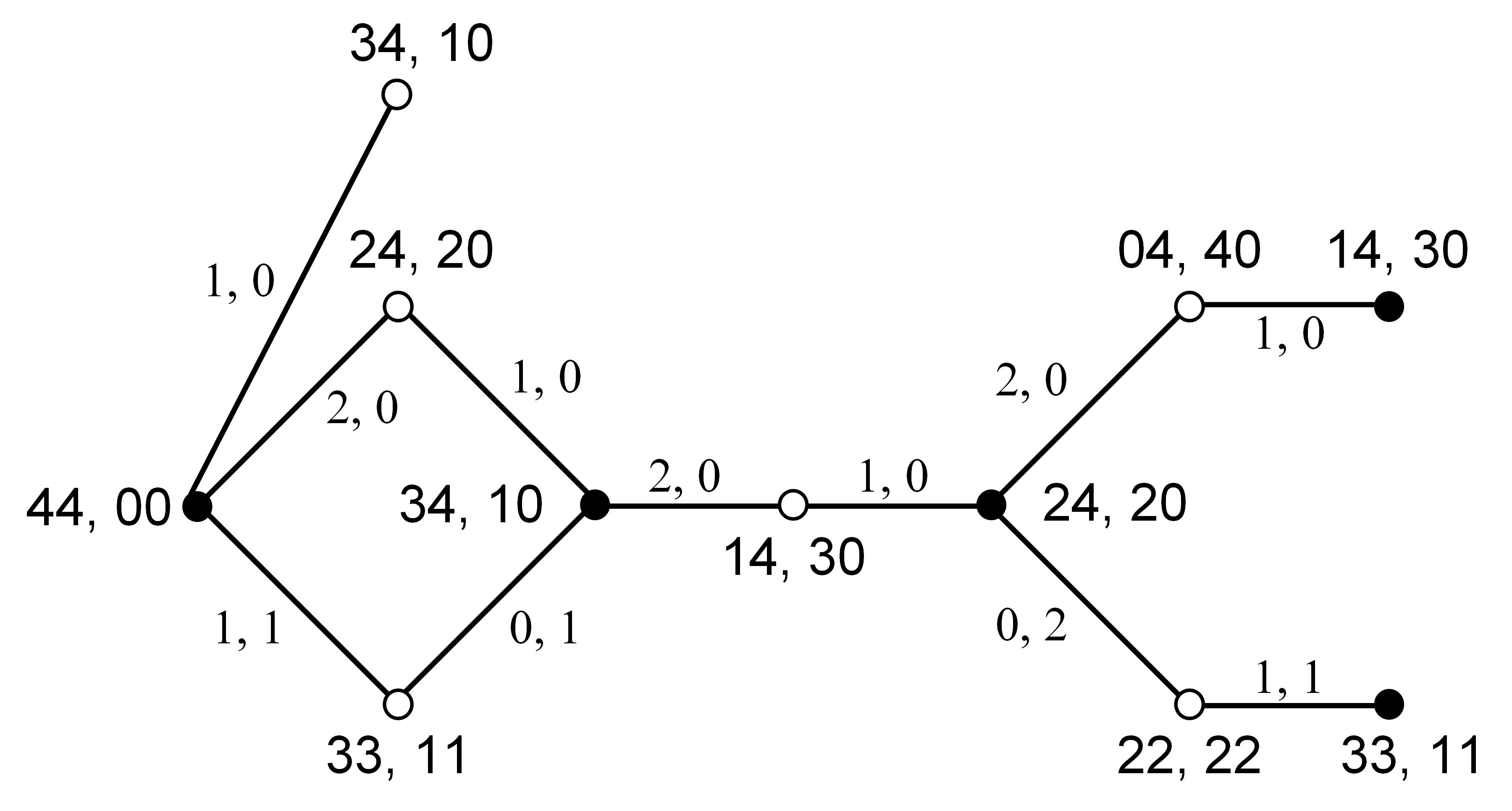}}
\caption{The state space of the \textit{MC} problem for $n = 4, b = 2$}
\label{fig3}
\end{figure}

The remaining cases are treated similarly.

In general, one can notice that if $b = 2$ and $n = 4$ or $5$ or if $b = 3$ and $n \ge 6$, then only the following states are reachable from the initial state $((n, n), (0, 0), left)$:
\begin{eqnarray*}
\begin{split}
&((n - p, n - p), (p, p), loc), p = 1, \dots, b - 1, loc = left, right;\\
&((n - b, n - b), (b, b), right);\\
&((n - q, n), (q, 0), loc), q = 1, \dots, n - 1, loc = left, right;\\
&((0, n), (n, 0), right).
\end{split}
\end{eqnarray*}

So, one can note that the connected component of the state space graph contained the initial state includes $2(n + b) - 1$ vertices in this case. 

Since the final state $((0, 0), (n, n), right)$ does not belong to the above set, it is unreachable from the initial state. So the problem has no solution. 
\end{proof}

From the proposition and the corollary of Theorem 1 it follows that the \textit{HW} problem also has no solution if $n$ is 4 or 5 and $b = 2$ or if $n > 5$ and $b = 3$. It is obvious that for $n \ge 6$ there will always be enough a four-seater boat.

\section{Category of states}
Let us turn to categories of states of the problems \textit{HW} and \textit{MC}.

First we introduce the category $Cat_X$ of the states of problem $X$, where $X$ denotes \textit{HW} or \textit{MC}, such that objects are states and morphisms are paths that connect the states on the state space graph; the identity morphism of the object is a path that consists of a single state. Let us show that $Cat_X$ is a category.

Suppose $p_1 \colon state_1 \to state_j$ and $p_2 \colon state_j \to state_k$ are morphisms of the form 
$state_1 \stackrel{f_2}\longrightarrow state_2 \stackrel{f_3}\longrightarrow \dots \stackrel{f_j}\longrightarrow state_j$ and $state_j \stackrel{f_{j+1}}\longrightarrow state_{j+1} \stackrel{f_{j+2}}\longrightarrow \dots \stackrel{f_k}\longrightarrow state_k.$
Then the morphism $p_2 \circ p_1 \colon state_1 \to state_k$ is a path that is a join of the above paths, so that it is defined as follows:
\[
state_1 \stackrel{f_2}\longrightarrow state_2 \stackrel{f_3}\longrightarrow \dots \stackrel{f_j}\longrightarrow state_j \stackrel{f_{j+1}}\longrightarrow state_{j+1} \stackrel{f_{j+2}}\longrightarrow \dots \stackrel{f_k}\longrightarrow state_k.
\]

Since the path join operation is associative, the following condition holds:
\[
(p_3 \circ p_2) \circ p_1 = p_3 \circ (p_2 \circ p_1),
\]
where $p_3 \colon state_k \to state_l$ is another morphism.

If $id_{state}$ is the identity morphism, then for morphisms $p_1 \colon state_1 \to state$ and $p_2 \colon state \to state_2$, we obviously have
\[ 
id_{state} \circ p_1 = p_1 \text{ and } p_2 \circ id_{state} = p_2.
\]

So, $Cat_X$ is a category.

Now consider the category $Cat_{HW /\!\sim}$, where objects are orbits of states of the \textit{HW} problem and morphisms are defined as follows. Suppose the morphism $p$ of the category $Cat_{HW}$ such that $p \colon state_1 \to state_k$ is a path in the state space graph of the \textit{HW} problem of the form 
\[
state_1 \stackrel{f_2}\longrightarrow state_2 \stackrel{f_3}\longrightarrow \dots \stackrel{f_k}\longrightarrow state_k.
\]
Then the sequence 
\[
S_nstate_1 \stackrel{S_nf_2}\longrightarrow S_nstate_2 \stackrel{S_nf_3}\longrightarrow \dots \stackrel{S_nf_k}\longrightarrow S_nstate_k
\]
defines a morphism in the category $Cat_{HW /\!\sim}$. 
So, there is a morphism $p'' \colon S_nstate_1 \to S_nstate_k$, for some $state_1, state_k \in V$, in the category $Cat_{HW /\!\sim}$ if there are $state_1^{(0)} \in S_nstate_1$, $\dots$, $state_k^{(0)} \in S_n state_k$ and the moves $f_2^{(0)} \in S_nf_2$, $\dots$, $f_k^{(0)} \in S_nf_k$ such that 
\[
state_1^{(0)} \stackrel{f_2^{(0)}}\longrightarrow state_2^{(0)} \stackrel{f_3^{(0)}} \longrightarrow \dots \stackrel{f_k^{(0)}}\longrightarrow state_k^{(0)}
\]
is a path in the state space graph of the \textit{HW} problem.

The identity morphism is a path that consists of a single item, as before.

Define the map $F \colon Cat_{HW} \to Cat_{HW /\!\sim}$. We put $F(state) = S_nstate$ and $F(p) = p''$, where the morphism $p'' \colon S_nstate_1 \to S_nstate_k$ corresponds to the morphism $p \colon state_1 \to state_k$ as it was described above. Suppose that $q \colon state_k \to state_l$ is another morphism in $Cat_{HW}$; then
\[
F(q \circ p) = F(q) \circ F(p).
\]

Besides, $F(id_{state})= id_{S_nstate}$, so that the following relation holds
\[F(id_{state}) = id_{F(state)}.\]

Thus, $F$ is a functor from the category $Cat_{HW}$ to the category $Cat_{HW /\!\sim}$. Likewise, one can define a functor from the category $Cat_{HW}$ to the category $Cat_{MC}$.

Theorems 1 and 2 imply the following theorem.

\begin{theorem}
The categories $Cat_{HW /\!\sim}$ and $Cat_{MC}$ are equivalent.
\end{theorem}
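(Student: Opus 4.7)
The plan is to exhibit the functor $G\colon Cat_{HW/\!\sim} \to Cat_{MC}$ induced by the bijection $g$ of Theorem~1 and verify that it is essentially surjective, full, and faithful. This is the standard criterion for an equivalence, so the bulk of the work is checking each of these three properties using Theorems~1 and 2 respectively.

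First I would define $G$ on objects by $G(S_n state)=g(S_n state)$ (well-defined by Theorem~1) and on morphisms as follows: given a morphism $p''\colon S_n state_1 \to S_n state_k$ of $Cat_{HW/\!\sim}$, pick any lift $state_1^{(0)} \stackrel{f_2^{(0)}}\longrightarrow \cdots \stackrel{f_k^{(0)}}\longrightarrow state_k^{(0)}$ in $Cat_{HW}$ (which exists by the very definition of morphism in $Cat_{HW/\!\sim}$), and put
\[
G(p'') \;=\; \bigl(g(S_n state_1^{(0)}) \to \cdots \to g(S_n state_k^{(0)})\bigr),
\]
which is a genuine path in $Cat_{MC}$ by the Corollary to Theorem~1. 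Independence of the chosen lift is immediate, since the output depends only on the orbits $S_n state_j^{(0)}$ and the orbits $S_n f_j^{(0)}$. Functoriality (composition and identities) is routine: concatenation of lifts gives a lift of the concatenation.

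Essential surjectivity is immediate from Theorem~1, which gives a bijection on objects: for every $state'\in V'$ we have $state'=g(h(state'))=G(h(state'))$.

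For faithfulness, suppose $G(p''_1)=G(p''_2)$ for two morphisms of $Cat_{HW/\!\sim}$ with the same source and target. Then the two underlying sequences of MC-states coincide, and since $g$ is a bijection (Theorem~1) the sequences of orbits of HW-states coincide as well. It remains to argue that the orbit-moves also coincide: an orbit-move $S_n(B,loc)$ is determined by the unordered pair $(|B|_W,|B|_H,loc)$, because any two safe subsets with the same counts of husbands and wives lie in a single $S_n$-orbit. Two orbit-moves therefore have the same image under $G$ precisely when they are equal, so $p''_1=p''_2$.

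The hard part is fullness, and this is where Theorem~2 does the real work. Given a morphism in $Cat_{MC}$, that is, a path $state'_1\to\cdots\to state'_k$, I need to produce an orbit-path in $Cat_{HW/\!\sim}$ whose image is this MC-path. The inductive construction in the proof of Theorem~2 is phrased for the initial MC-state but uses nothing specific about it beyond its type (b'): at each step one brings the current HW-state into the canonical form prescribed in Section~2 by applying an appropriate permutation $\pi\in S_n$ (which preserves the orbit), then performs the transition dictated by the case analysis (i)--(vi). Starting instead from any representative $state_1\in h(state'_1)$, the same procedure yields an HW-path $state_1 \to state_2 \to \cdots \to state_k$ with $state_j\in h(state'_j)$ for all $j$, and its image under the quotient functor is a morphism in $Cat_{HW/\!\sim}$ whose image under $G$ is exactly the given MC-path. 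Hence $G$ is full, and together with the preceding steps this gives the equivalence of categories.
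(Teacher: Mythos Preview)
Your proof is correct and follows essentially the same approach as the paper: define the functor via the bijection $g$ of Theorem~1 and verify that it is fully faithful and essentially surjective, invoking Theorem~2 for fullness. You are somewhat more explicit than the paper in two places---the faithfulness argument (spelling out why equal images force equal orbit-moves) and the observation that the inductive lifting procedure of Theorem~2 works for an arbitrary MC-path, not only for a full solution starting at the initial state---but these are elaborations of the same argument rather than a different route.
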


\begin{proof}
Consider the map $F \colon Cat_{HW /\!\sim} \to Cat_{MC}$ such that $F(S_nstate) = g(S_nstate)$ and $F(p'') = p'$, where $g \colon V/\!\sim \to V'$ is a map defined in Theorem 1, and, further, $p''$ and $p'$ are morphisms of the form $p'' \colon S_nstate_1 \to S_nstate_k$ and $p' \colon g(S_nstate_1) \to g(S_nstate_k)$ in the categories $Cat_{HW /\!\sim}$ and $Cat_{MC}$, respectively, that correspond to the path from $state_1$ to $state_k$ in the state space graph of the \textit{HW} problem, as this was described above. It is clear that $F$ is a functor.
 
Now suppose $p''\colon S_nstate_1 \to S_nstate_k$ and $q''\colon S_nstate_1 \to S_nstate_k$, where $state_1$ and $state_k$ belong to $V$, are morphisms in the category $Cat_{HW /\!\sim}$ such that $F(p'') = F(q'')$. Then it is easy to see that $p'' = q''$.  

Further, let $p' \colon state'_1 \to state'_k$ be a morphism in the category $Cat_{MC}$. Then $state'_1 = g(S_nstate_1)$ and $state'_k = g(S_nstate_k)$ for some $state_1, state_k \in V$. From Theorem 2 it follows that there exists a morphism $p''\colon S_nstate_1 \to S_nstate_k$ such that $p' = F(p'')$.

Finally, suppose $state'$ is a state in the \textit{MC} problem. Then $h(state')$, where $h$ is a map defined in Theorem 1, is an object in the category $Cat_{HW /\!\sim}$ such that $F(h(state')) = state'$. Hence, there is an isomorphism $$id_{state'}\colon F(h(state')) \to state'.$$

Thus, the functor $F$ provides an equivalence of the categories $Cat_{HW /\!\sim}$ and $Cat_{MC}$. 
\end{proof}

Now, let us define an action of the group $S_n$ on the category $Cat_{HW}$. We assume that the group acts on objects and transitions as this was defined in Section 3. Further, suppose $p \colon state_1 \to state_k$ is a morphism; then we put $\pi p \colon \pi state_1 \to \pi state_k$, so that if the morphism $p$ is a path $state_1 \stackrel{f_2}\longrightarrow state_2 \stackrel{f_3}\longrightarrow \dots \stackrel{f_k}\longrightarrow state_k$, then the morphism $\pi p$ is a path
\[
\pi state_1 \stackrel{\pi f_2}\longrightarrow \pi state_2 \stackrel{\pi f_3}\longrightarrow \dots \stackrel{\pi f_k}\longrightarrow \pi state_k.
\]

Finally, let us consider the orbit category $Orb(Cat_{HW})$. The objects of it are objects of $Cat_{HW}$, i.e., states of the \textit{HW} problem. The set of morphisms from $state_1$ to $state_2$ is a disjoint union of sets of morphisms of $Cat_{HW}$ from $state_1$ to $\pi state_2$ for each $\pi \in S_n$. The identity morphism of $state$ is the same as in the category $Cat_{HW}$. 

Consider the morphism $p_2 \circ p_1 \colon state_1 \to state_k$, where $p_1\colon state_1 \to state_j$ and $p_2\colon state_j \to state_k$ are morphisms in the category $Orb(Cat_{HW})$. According to the definition, there are permutations $\pi_1$ and $\pi_2$ such that $p_1$ and $p_2$ are morphisms $q_1 \colon state_1 \to \pi_1state_j$ and $q_2\colon state_j \to \pi_2state_k$, respectively, in the category $Cat_{HW}$, so that $p_2 \circ p_1$ is the morphism $\pi_1q_2 \circ q_1\colon state_1 \to \pi_1\pi_2 state_k$ in the category $Cat_{HW}$.

\begin{theorem}
There is a functor from the category $Orb(Cat_{HW})$ to the category $Cat_{MC}$.
\end{theorem}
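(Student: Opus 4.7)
The plan is to define a functor $G \colon Orb(Cat_{HW}) \to Cat_{MC}$ by composing the functor $F \colon Cat_{HW} \to Cat_{HW /\!\sim}$ introduced in Section~5 with the equivalence of categories $Cat_{HW /\!\sim} \to Cat_{MC}$ provided by Theorem~3. The delicate point is that the composition law in $Orb(Cat_{HW})$ introduces a twist by a permutation, so I will need to check that this twist becomes invisible after passing to $Cat_{HW /\!\sim}$.

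On objects, I put $G(state) = g(S_n state)$, where $g \colon V/\!\sim \ \to V'$ is the bijection of Theorem~1. A morphism from $state_1$ to $state_2$ in $Orb(Cat_{HW})$ is, by definition, a $Cat_{HW}$-morphism $q \colon state_1 \to \pi state_2$ for some $\pi \in S_n$, together with the tag $\pi$ from the disjoint union; I send it to the path in $Cat_{MC}$ obtained by applying $g$ to the sequence of orbits traversed by $q$, which is the image of $q$ under the composite $Cat_{HW} \to Cat_{HW /\!\sim} \to Cat_{MC}$. The codomain of the image is $g(S_n(\pi state_2)) = g(S_n state_2) = G(state_2)$, since $S_n$-orbits are invariant under applying $\pi$, so the assignment lands in the correct hom-set.

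For functoriality, the identity one-state morphism at $state$ corresponds to the choice $\pi = e$ and is sent to the identity one-state path at $g(S_n state)$, so identities are preserved. For composition, suppose $p_1 \colon state_1 \to state_j$ and $p_2 \colon state_j \to state_k$ come from $Cat_{HW}$-morphisms $q_1 \colon state_1 \to \pi_1 state_j$ and $q_2 \colon state_j \to \pi_2 state_k$; the excerpt realizes $p_2 \circ p_1$ in $Cat_{HW}$ as $\pi_1 q_2 \circ q_1 \colon state_1 \to \pi_1 \pi_2 state_k$. The key observation I will need is that $F(\pi q) = F(q)$ in $Cat_{HW /\!\sim}$ for every permutation $\pi$ and every morphism $q$: this holds because each state and each move along $\pi q$ lies in the same $S_n$-orbit as the corresponding state and move along $q$, so the sequence of orbits that defines the $Cat_{HW /\!\sim}$-morphism is unchanged. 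Using functoriality of $F$ and of the equivalence from Theorem~3, this yields $G(p_2 \circ p_1)$ equal to the image of $F(\pi_1 q_2) \circ F(q_1) = F(q_2) \circ F(q_1)$, which is precisely $G(p_2) \circ G(p_1)$. The main obstacle is this orbit-invariance observation for $F$; once it is in place, the permutation twist in the composition law of $Orb(Cat_{HW})$ disappears under $G$ and functoriality is automatic.
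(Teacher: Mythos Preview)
Your proposal is correct and follows essentially the same approach as the paper: the paper defines the functor directly to $Cat_{MC}$ via $state \mapsto g(S_n state)$ and checks composition using the single observation $S_n(\pi\,state) = S_n state$, which is exactly your orbit-invariance observation $F(\pi q) = F(q)$. Your explicit factorization through $Cat_{HW/\!\sim}$ and Theorem~3 is a cosmetic repackaging of the same argument, not a different route.
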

\begin{proof}
Define the map $F \colon Orb(Cat_{HW}) \to Cat_{MC}$. We put $F(state) = g(S_nstate)$, so that $F$ takes the state $(L, R, loc)$ to $\left((| L |_W, | L |_H), (| R |_W, | R |_H), loc\right)$. If $p$ is a morphism of the form $p \colon state_1 \to state_k$ in the category $Orb(Cat_{HW})$,  i.e., a path from $state_1$ to $\pi state_k$ for some permutation $\pi$ in state space graph of the \textit{HW} problem, then $F(p)$ is a morphism from $g(S_nstate_1)$ to $g(S_nstate_k)$ in the category $Cat_{MC}$, i.e., a path in the state space graph of the \textit{MC} problem, according to the correspondence described in Theorem 1. 

The condition
\[F(p_2 \circ p_1) = F(p_2) \circ F(p_1)\]
holds because $S_nstate = S_n\pi state$ for $\pi \in S_n$ and $state \in V$. It is clear that the condition
\[F(id_{state}) = id_{F(state)}\]
is also satisfied, so that $F$ is a functor from the category $Orb(Cat_{HW})$ to the category $Cat_{MC}$. 
\end{proof}

\section{Conclusion}
The use of algebraic methods makes it possible to establish a natural connection between the two problems, which was considered above. Now let us turn to historical remarks. The history of the jealous husbands problem from Alcuin to Tartaglia is described in \cite{franki}. The later history, including the transition to the problem of missionaries and cannibals, is given, for example, in \cite{singmaster}.

The history of the jealous husbands problem developed in the direction of generalization and modification. There appeared variants of the problem for more couples with an understanding of the necessity to increase the capacity of the boat. For instance, in the beginning of the 16th century Luka Pacioli noticed that for four or five couples a three-seater boat is required (see \cite{franki}); Tartaglia in 1556, in fact, changed the condition of the task, allowing unsafe situations, before some people entered the boat to sail to the other side. And there appeared an island that provided the opportunity to transport any number of couples with a two-seater boat (De Fontenay, 1879, see \cite{singmaster}). The formulation of the problem changed from friends and their sisters to jealous husbands with wives, and then to masters and valets (1624) (see \cite{franki}). The latter was indicative because one of the first known variants of the missionaries and cannibals problem was about servants who robbed masters if they were more numerous than masters (1881) (see \cite{singmaster}). 
In Russia, instead of husbands and wives, knights and squires are usually used with the requirement that a squire without his knight, because of his cowardice, cannot be in the presence of other knights. It had started no later than the 1970s, when the second edition of Ignatiev's book (1979) "In the realm of savvy" \cite{ignatiev} was published, where husbands and wives from the first edition (1908) were replaced with knights and squires.

The solution of the jealous husbands problem given in the form 
\begin{center}
\textit{Women, woman, women, wife, men, man and wife,\\
Men, woman, women, man, man and wife}
\end{center}
in the 13th century (see \cite{franki}), which is translated into English in \cite{singmaster}, is actually a solution of the problem formulated as follows. \textit{
Three couples must cross a river by a two-seater boat. Nowhere ashore or in the boat the number of women should be greater than the number of men. How can they cross the river?}

But such a problem has not arisen for a long time. A possible reason is the following: earlier formulations of the \textit{HW} problem involved various kinds of discrimination existing in the society (see \cite{singmaster}), whereas the men-women formulation of the \textit{MC} problem does not have a straightforward social meaning. As it also was noticed in \cite{singmaster}, the first mentioning of the problem of missionaries and cannibals was in 1879, and by 1891 it was considered as well-known  (in addition to masters and servants, there was a variant about explorers and natives).

So, the problem of missionaries and cannibals most likely appeared in the late 1870s. Just at that time, the group approach began to spread widely. As early as in 1830, Galois obtained far-reaching results and, in particular, solved the problem of solvability of equations in radicals, using an action on the set of roots by permutations, but, as is known, his writings were not published until 1843. In 1872, Felix Klein, who later became a famous popularizer of mathematics, proclaimed a group approach in the Erlangen program. With the help of the group approach, in particular, the known problems of antiquity were solved. It became actively used not only in mathematics, but also in physics and other sciences (see, for example, \cite{ji}). Thus, on the wave of popularization of the group approach, it became possible to establish a connection between the problems described in Section 3 right at the time when the missionaries and cannibals problem  appeared. It is not known whether such a connection has really been established before now. At least, one can conclude that the problem arose in the appropriate moment of time.


\begin{thebibliography}{1}
\bibitem{amarel} Amarel, S. (1968). On representations of problems of reasoning about actions. In: Michie, D., ed. \textit{Machine Intelligence}, 3. Amsterdam, London, New York: Elsevier/North--Holland, pp. 131--171. 

\bibitem{bellman} Bellman, R. (1962). Dynamic programming and "difficult crossing" puzzles. \textit{Math. Mag.} \textbf{35}(1):  27--29. 

\bibitem{fraley} Fraley, R., Cooke, K. L., Detrick, P. (1966). Graphical solution of difficult crossing puzzles. \textit{Math. Mag.} \textbf{39}(3): 151--157.

\bibitem{franki} Franci, R. (2002). Jealous husbands crossing the river: a problem from Alcuin to Tartaglia. In: Dold-Samplonius, Y., Dauben, J. W., Folkerts, M., van Dalen, B., eds. \textit{From China to Paris: 2000 Years Transmission of Mathematical Ideas}. Stuttgart: Franz Steiner Verlag, pp. 289--306. 

\bibitem{hadley} Hadley, J., Singmaster, D. (1992). "Problems to Sharpen the Young". An annotated translation of 'Propositiones ad acuendos juvenes'. The oldest mathematical problem collection in Latin attributed to Alcuin of York. \textit{Math. Gaz.}, \textbf{76}(475):  102--126.

\bibitem{ignatiev} Ignatiev, E. I. (1979). \textit{In the realm of savvy}. 2nd ed., Moscow: Nauka. \textit{In Russian}.

\bibitem{ji} Ji, L., Papadopoulos, A., eds. (2015). Sophus Lie and Felix Klein: The Erlangen Program and Its Impact in Mathematics and Physics, \textit{IRMA Lectures in Mathematics and Theoretical Physics}, Vol. 23. Z{\"u}rich: European Mathematical Society Publishing House.

\bibitem{singmaster} Pressman, I., Singmaster, D. (1989). "The jealous husbands" and "the missionaries and cannibals". \textit{Math. Gaz.} \textbf{73}(464): 73--81.

\bibitem{schwarz} Schwarz, B. L. (1961). An analytic method for the "difficult crossing" puzzles. \textit{Math. Mag.} \textbf{34}(4): 187--193.
\end{thebibliography}
\end{document}